\renewcommand\thesection{\Roman{section}} 
\renewcommand\thesubsection{\Roman{subsection}} 
\titleformat{\section}[block]{\large\scshape\centering}{\thesection.}{1em}{} 
\titleformat{\subsection}[block]{\large}{\thesubsection.}{1em}{} 
\newcommand{\rset}{\mathbb{R}}
\title{\vspace{-15mm}\fontsize{24pt}{10pt}\selectfont\textbf{Maximizing graph probability under conditionally exponential models}} 
\author{
\large
\textsc{Stefano Nasini}\\[2mm] 
\normalsize Universitat Politecnica de Catalunya \\ 
\normalsize \href{stefano.nasini@upc.edu}{stefano.nasini@upc.edu} 
\vspace{-5mm}
}
\date{}
\begin{document}

\maketitle 

\pagenumbering{gobble}
\thispagestyle{empty} 


\begin{abstract}

Designing reliable networks consists in finding topological structures, which are able to successfully carry out desired processes and operations. When this set of activities performed within a network are unknown and the only available information is a probabilistic model reflecting topological network features, highly probable networks are regarded as "reliable", in the sense of being consistent with those probabilistic model. In this paper we are studying the reliability maximization, based on the Exponential Random Graph Model (ERGM), whose statistical properties has been widely used to capture complex topological feature of real-world networks. Under such models the probability of a network is maximized when specified structural properties appear in the network. However, the search of maximally reliable (highly probable) networks might result in difficult combinatorial optimization problems and an important goal of this work is to translate them into solvable systems of linear constraints. Analytical and numerical results are provided, using exact optimization techniques and efficient computer implementation.

\end{abstract}

\hspace{0.4cm}\textbf{Key words}: Network reliability; Random graphs; Combinatorial Optimization.


\begin{multicols}{2} 

\section{Introduction}

\lettrine[nindent=0em,lines=3]{N}etwork reliability has been studied as the ability of a network to successfully carry out desired processes and operations. When those processes are only probabilistically known, the design of reliable networks might consist in the maximization of the probability of success of a given operation within a network or the minimization of its failure.

Sometimes the only available information is a probability distribution on the set of edges, so that the optimal decision of a global planner consists in the search of highly probable networks, under the specified probabilistic model.

The first and best known probabilistic model of network formation has been introduced by the seminal work of Erd\"os and Rainyi \cite{Erdos1959}, who considered a fixed set of $n$ nodes and an independent and equal probability $2d/(n(n-1))$ of observing edges among them. A related variant of this model considers networks chosen uniformly at random from the collection of all graphs with $n$ nodes and $d$ edges. (For more details about network properties, see Bollobas \cite{Bollobas1985}, and Wasserman and Faust \cite{WassermanFaust1994}.)

This variant of the Erd\"os--Rainyi random graph might be regarded as a conditionally uniform random model and a straightforward generalization of it is to consider different conditioning information, other than the number of edges. A general approach to characterize families of networks with fixed conditioning information is based on generating integer solution of systems of linear constraints \cite{CastroNasini2014}.

Let $x_{ij}$ be the entries of the adjacency matrix (AM, from now on) of an undirected graph and $\mathbf{x}$ the vector associated to its components sorted in lexicographic order. Castro and Nasini \cite{CastroNasini2014} considered the probability space $(\chi, p, \Im)$, where $\chi$ is the set of all AMs of networks verifying a specified collection of linear constraints, that is to say $\chi = \{ x_{ij} \in \{0,1\}, (i,j) \in \mathcal{H}^2 ~:~ A\mathbf{x}=\mathbf{b} \}$ and $\mathcal{H}^2 = \{(i,j) : 1\le i \le n-1, i < j \leq n\}$. The set $\Im$ is a $\sigma$-algebra on $\chi$ and $p$ a probability measure. In the case of dealing with the set of networks with fixed number of edges, we have $A = [1 \ldots 1]$.

The probability measure $p$ might take different forms, capturing the processes and operations performed within the network. Exponential Random Graph Models (ERGM) has been widely used to capture the fact that the probability of a network should reflect its structural properties. Here we are considering Conditionally Exponential Random Graph Models (CERGOM), which keeps the main probabilistic properties of the classical ERGM into a constrained sample space $\chi$, as shown in the next section. Highly probable networks are regarded as "reliable" in the sense of being consistent with the real-world scenario captured by the specified probabilistic model, so that the problem of designing reliable networks is here translated into the one of maximizing graph probability under conditionally exponential models.

This analysis casts a mathematical bridge between probabilistic and optimization based models of network formation, which allow takeing into account the emerging properties from the point of a global planner, who wish to allocate connections among nodes in such a way as to successfully carry out  processes and operations performed within the networks.

The rest of this paper provides a mathematical programming based approach to deal with this problem, based on the linearization of complex combinatorial properties.


\section{Reliable networks by probability maximization}


Consider a collection of independent and identically distributed networks $\mathbf{x}_1, \ldots, \mathbf{x}_N \sim p$. Let $S_j(\mathbf{x}_i)$, for $j = 1, \ldots, s$, be a structural feature of network $\mathbf{x}_i$, for $i = 1, \ldots, N$, and $\widehat{\mu}_j = \sum_{i=1}^{N} S_j(\mathbf{x}_i)/N$ the empirical expectation of $S_j$, for $j = 1, \ldots, s$. The ERGM arises as an answer to the question \emph{can we recover} $p$ \emph{from} $\widehat{\mu}_1, \ldots, \widehat{\mu}_s $? A reasonable requirement a probability measure $p$ must verify is that $\mathbb{E}_p[S_j(\mathbf{x})] = \int_{\chi} S_j(\mathbf{x})p(\mathbf{x}) dx = \widehat{\mu}_j$, for $f = 1 \ldots s$, where $\chi = \{ x_{ij} \in \{0,1\}, (i,j) \in \mathcal{H}^2 ~:~ A\mathbf{x}=\mathbf{b} \}$.
(We are using the Lebesgue integral as a generalization of summation for continuous spaces, that is to say, spaces of valued networks, where $\chi$ is the set of valued AMs verifying a specified system linear constraints. In the cases of zero-one-networks considered in this paper the integral can be replaced by a summation.)

To solve this problem, the maximum entropy principle leads to pick the distribution $\widehat{p(\mathbf{x})}$, which verifies $\mathbb{E}_p[S_j(\mathbf{x})] = \widehat{\mu}_j$, while maximizing the entropy:
\begin{subequations}\label{eq:1.3.2_2}
\begin{eqnarray*}
\max& & -\displaystyle \int_{\chi} p(\mathbf{x}) \log p(\mathbf{x}) dx\\
\hbox{s. to}
& & \int_{\chi} S_j(\mathbf{x})p(\mathbf{x}) dx = \widehat{\mu}_j, \quad f = 1, \ldots, s\\
& & \int_{\chi} p(\mathbf{x}) dx = 1.
\end{eqnarray*}
\end{subequations}

The classical method for solving this class of problems is to apply the Lagrange multipliers to each of the constraints and maximize the augmented functional with respect to $p(\mathbf{x})$.
As described by Jaynes \cite{Jaynes1957}, applying the Euler equation of calculus of variation, the functional form of a conditionally exponential random graph model might be obtained in few algebraical steps:
\begin{equation*} \label{eq:1.3.2_4}
\widehat{p(\mathbf{x})} = \left\{
   \begin{array}{l l}
     \displaystyle \kappa \exp \left( \boldsymbol{\theta} \mathbf{S}(\textbf{x}) \right) & \quad \text{if $\mathbf{x} \in \chi$}\\
     0 & \quad \text{otherwise}
   \end{array} \right.
\end{equation*}
The boldface symbols $\boldsymbol{\theta}$ and $\mathbf{S}(\mathbf{x})$ denote the vectors $[\theta_1 , \ldots, \theta_s]^T$ and $[S_1(\mathbf{x}) , \ldots, S_s(\mathbf{x})]^T$ respectively. The vector parameter $\theta_j \in \rset^{s}$ controls the tendency of networks with parameters $S_j(\mathbf{x})$ to be observed in the data. The scalar quantity $\kappa = \int_{\textbf{x} \in \boldsymbol{\chi} }e^{\sum_{j = 1}^{s} \theta_j S_j(\mathbf{x})} \, d\textbf{x}$ is known as the partition function. Many papers from statistical physics call the quantity $\sum_{j = 1}^{s} \theta_j S_j(\mathbf{x})$ the \emph{graph Hamiltonian} and denote it as $H(\mathbf{x})$.

The just described CERGM represents a generalization of the classical ERGM, where the sample space $\chi$ can be arbitrarily defined. If $\chi$ is the set of all simple graphs, then $|\chi|= 2^{\binom{n}{2}}$. If $\boldsymbol{\chi}$ is the set of all undirected graphs with fixed number of edges $d$, then $|\chi|= \binom{n(n-1)/2}{d}$.

Finding a reliable network under the specified CERGM consist in maximizing $\kappa \exp( \boldsymbol{\theta} \mathbf{S}(\textbf{x}))$, subject to $\mathbf{x} \in \chi$.

The highly combinatorial nature of these problems is analyzed in the next section, based on different mathematical programming formulations.


\section{Network properties by systems of linear constraints}


Consider the set $\chi$ of undirected networks with fixed number of edges $d$ and an exponential model on the sample space $\chi$, with graph Hamiltonian $H(\mathbf{x}) = \theta \sum_{i < j < k} x_{ij} x_{jk} x_{ij}$, representing the number of closed triangles in the network. Maximizing the logarithm of the probability inside the specified sample space leads to the non-linear binary problem
\begin{equation}\label{eq:MaxTriads_FixedDensity}
\begin{array}{rl}
\max         & \displaystyle \sum_{i < j < k} x_{ij} x_{jk} x_{ij}\\
\hbox{s. to} & \displaystyle \sum_{i < j} x_{ij} = d \\
             & \displaystyle x_{ij} \in \{0,1\} ~ (i,j) \in \mathcal{H}^2.
\end{array}
\end{equation}
Let $w_{ijk}$ be the binary indicator of the $(i,j,k)$ triad, which is equal to one if a closed triangle between $i$, $j$ and $k$ exists, and zero otherwise. The following system of linear constraints characterize the state of $w_{ijk}$:
\begin{equation}\label{eq:triad_B}
\begin{array}{rl}
                  1 - z_{ijk} ~ \leq ~ x_{ij} ~ \leq ~ y_{ijk}               &  \\
                  1 - z_{ijk} ~ \leq ~ x_{jk} ~ \leq ~ y_{ijk}               &  \\
                  1 - z_{ijk} ~ \leq ~ x_{ij} ~ \leq ~ y_{ijk}               &  \\
                  y_{ijk} - z_{ijk} ~ \leq ~ w_{ijk}                         &  \\
                  w_{ijk} + z_{ijk} ~ \leq ~ 1                               &  \\
                  3 - z_{ijk} \geq x_{ij} + x_{jk} + x_{ik}                  &  \\
                  y_{ijk}, z_{ijk}, w_{ijk}  \in \{0,1\}                     &
\end{array}
\end{equation}
where $y_{ijk}$ and $z_{ijk}$ are auxiliary variables.

By introducing (\ref{eq:triad_B}) in (\ref{eq:MaxTriads_FixedDensity}), for every $(i,j,k)$ such that $1\le i \le n-2, ~ i < j \leq n-1, ~ j < k \leq n$, and replacing the objective function $\sum_{i < j < k} x_{ij} x_{jk} x_{ij}$ with $\sum_{(i,j,k) \in \mathcal{H}^{3}} w_{ijk}$, the maximization of the graph probability under the specified model becomes a linear program in binary variables, which can be solved up to optimality by standard integer programming technics \cite{Schrijver1998, Schrijver2003}.

Note that for particularly small $d$ (high sparsity) an optimal network consist of a fully connected subgraph and several disconnected nodes. One possibility to overcome this drawback is either to include more network properties in the graph Hamiltonian or to redefine the sample space $\chi$ in such a way that the unwanted trivial solutions are discarded, for example by forcing \emph{network connectivity}.

A straightforward way to algebraically force connectivity is to require the existence of a flow circulating within the network, from one node to all the others. Thus, we make use of an artificial flow of $n-1$ units, departing from one node $h \in \mathcal{V}$ and arriving to each of the $n-1$ remaining nodes. The existence of such flow is a sufficient and necessary condition for the network to be connected:
\begin{equation}\label{eq:connectivity}
\begin{array}{rll}
                  \displaystyle \sum_{j = 1}^{n} f_{hj} - \displaystyle \sum_{j = 1}^{n} f_{jh} &= n-1    & \\
                  \displaystyle \sum_{j = 1}^{n} f_{kj} - \displaystyle \sum_{j = 1}^{n} f_{jk} &= -1     & k \neq h \\
                  f_{ij} + f_{ji} & \leq n x_{ij}            & \multirow{2}{*}{$(i,j) \in \mathcal{H}^{2}$}  \\
                  f_{ij} f_{ji}   \geq 0 &                   &

\end{array}
\end{equation}

Based on the same network flow intuition the \emph{average path length}, which is the average number of steps along the shortest paths for all pairs of nodes --$\frac{1}{n(n-1)} \sum_{i\neq j} \delta_{ij}$, where $\delta(i,j)$ is the shortest path between $i$ and $j$ -- can be linearly modeled and introduced in the graph Hamiltonian.

To measure the minimum amount of circulating flow needed to deliver one unit of flow from one node to another, $n$ types of commodity-flows are defined (one per each node) along with $n-1$ units of each type of flow departing from the corresponding node and reaching all the remaining nodes:
\begin{equation}\label{eq:FlowBased_APL}
\begin{array}{rlll}
   \displaystyle \sum_{k = 1}^{n} f_{ij}^{k} + f_{ji}^{k} \leq n^{2} x_{ij},                        ~ (i,j) \in \mathcal{H}^{2}  \\
   \displaystyle \sum_{j = 1}^{n} f_{hj}^{h} - \displaystyle \sum_{j = 1}^{n} f_{jh}^{h} = n-1,  ~ h \in \mathcal{V} \\
   \displaystyle \sum_{j = 1}^{n} f_{kj}^{h} - \displaystyle \sum_{j = 1}^{n} f_{jk}^{h} = -1,   ~ k \neq h \\
   f_{ij} f_{ji} \geq 0,  ~ (i,j) \in \mathcal{H}^{2}
\end{array}
\end{equation}
The total flow-based distance between nodes is then defined as $S(\textbf{x}) = \min_{f} \sum_{(i,j) \in \mathcal{H}^{2}} \sum_{k = 1}^{n} f_{ij}^{k} + f_{ji}^{k}$, subject to (\ref{eq:FlowBased_APL}).

Another commonly used network parameter is the correlation between nodes properties or the physical distance between nodes. Consider a measure of distance between nodal properties $\delta$. The total distance between nodes in the networks with respect to the space generated by the nodal properties is $S(\textbf{x}) = \sum_{(i,j) \in \mathcal{H}^{2}} \delta_{ij} x_{ij}$.

Under the exponential family, network statistics must be additivity included in the graph Hamiltonian, as resulted from the maximum entropy principle: $\kappa \exp ( \boldsymbol{\theta} \mathbf{S}(\textbf{x}) )$. Nonetheless, it has been widely discussed (see Handcook \cite{Handcock2003}) that this probabilistic models suffer of high \emph{degeneracy} and lack of \emph{robustness} (or \emph{stability}). Degeneracy entails that the model places a disproportionate probability mass on only on a few of the possible graph configurations in $\chi$. The lack of robustness can be seen from two different point of view:
\begin{itemize}
\item small changes in the parametrization $\boldsymbol{\theta}$ give rise to drastic changes in the optimal networks;
\item several alternative optimal solutions with drastically different topological structures exist.
\end{itemize}

To deal with the robustness of the optimal solution, consider the maximal graph probability $P^{*} = \max H(\mathbf{x})$, s. to $\mathbf{x} \in \chi$, and suppose there exist multiple $\mathbf{x} \in \chi$ with the same optimal value $P^{*}$. A robust solution of the problem of maximizing graph probability under conditionally exponential models is given by a second stage max--min formulation, which use the previously found optimal solution as a lower bound:
\begin{equation}\label{eq:MaxMinProb}
\begin{array}{rl}
\max          & H\\
              & H \leq \theta_j S_j(\mathbf{x}) \qquad j = 1 \ldots s\\
 \hbox{s. to} & \displaystyle \sum_{j = 1}^{s} \theta_j S_j(\mathbf{x}) \geq \gamma P^{*}\\
              & \displaystyle \mathbf{x} \in \chi,
\end{array}
\end{equation}
where $\gamma \in [0,1]$ is a tuning parameter to allow for sub-optimality. When $\gamma = 1$, the optimal solution of $(\ref{eq:MaxMinProb})$ is a network which maximizes the conditionally exponential probability, while ensuring max--min decision criterion (as a robustness criterion) among the alternative optimal networks. When $\gamma = 0$, the optimal solution of $(\ref{eq:MaxMinProb})$ is a network which maximizes the graph probability based on a \emph{non-linear-graph-Hamiltonian} $\kappa \exp ( \min \{\theta_i S(\textbf{x}) ~:~ i = 1 \ldots s \} )$, which doesn't belong to the exponential family.

The next section will numerically study the problems with the form of $(\ref{eq:MaxMinProb})$, with $\gamma = 0$.


\section{Max-min formulation and numerical results}


Let $S_1(\mathbf{x})$ and $S_2(\mathbf{x})$ be the number of non-connected pairs of nodes ($n(n-1)/2 - \sum_{(i,j) \in \mathcal{H}^{2}} x_{ij}$) and number of triangles (characterized by (\ref{eq:triad_B}) ) of a network $\mathbf{x}$. We wish to maximize the graph probability within the sample space of all connected networks (characterized by (\ref{eq:connectivity}) ) and under a non-linear graph Hamiltonian with the form of $H(\textbf{x}) = \min \{\theta_i S(\textbf{x}) ~:~ i = 1 \ldots s \}$. The resulting max--min formulation is the following:
\begin{equation}\label{eq:MaxMin}
\begin{array}{rll}
\max              & H & \\
\mbox{s. to}      & H \leq \alpha S_1(\mathbf{x})  \\
                  & H \leq (1-\alpha) S_2(\mathbf{x}) \\
                  & \mathbf{x} \in \chi,
\end{array}
\end{equation}
where $\alpha = \frac{\theta_1}{\theta_1+\theta_2}$ -- a rescaling of the model parameters --.

The direct computation of the optimal solution of problem (\ref{eq:MaxMin}) by brunch and bound algorithm is time consuming when $n \geq 50$ and a strong lower bound of the optimal solution can be used to reduce the number of brunch and bound nodes.
\newtheorem{proposition}{Proposition}
\begin{proposition}
The number of closed triangles in the optimal solution of problem (\ref{eq:MaxMin}) is bounded from below by $h = \min \left\{ (n-1), ~ \alpha \frac{(n-2)(n-1)}{2}\right\}$ and the number of edges is bounded from below by $(n-1) + h$.
\end{proposition}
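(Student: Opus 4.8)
The plan is to prove both inequalities at once by exhibiting an explicit connected network $G_0\in\chi$ whose objective value in (\ref{eq:MaxMin}) is at least $(1-\alpha)h$, and then extracting the stated bounds from the two constraints $H\le\alpha S_1(\mathbf{x})$ and $H\le(1-\alpha)S_2(\mathbf{x})$, which every feasible point — and in particular an optimal one — must satisfy. For $G_0$ I would take a hub vertex joined to all of the other $n-1$ vertices (which already makes the graph connected, so the flow system (\ref{eq:connectivity}) is satisfied), and then add $h$ further edges among the $n-1$ leaves, chosen so that the leaf subgraph stays triangle-free; this is possible since $h\le n-1$ is well below Mantel's bound $\lfloor(n-1)^2/4\rfloor$ for all but a handful of small $n$, which are checked directly. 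By construction $G_0$ has exactly $(n-1)+h$ edges and exactly $h$ triangles, each leaf--leaf edge closing a single triangle through the hub.

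Next I would compute $S_2(G_0)=h$ and $S_1(G_0)=\binom{n}{2}-(n-1)-h=\binom{n-1}{2}-h$, and use the elementary fact $\alpha\binom{n-1}{2}\ge h$, which is immediate from $h=\min\{n-1,\alpha\binom{n-1}{2}\}$ (either $h$ is the second term, or $h=n-1$, which is precisely the regime $\alpha\binom{n-1}{2}\ge n-1$). This gives $\alpha S_1(G_0)=\alpha\binom{n-1}{2}-\alpha h\ge h-\alpha h=(1-\alpha)h$ and $(1-\alpha)S_2(G_0)=(1-\alpha)h$, so $H=(1-\alpha)h$ is feasible with $\mathbf{x}=G_0$, and hence the optimal value obeys $H^{*}\ge(1-\alpha)h$. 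For an optimal network $\mathbf{x}^{*}$ one has $H^{*}=\min\{\alpha S_1(\mathbf{x}^{*}),(1-\alpha)S_2(\mathbf{x}^{*})\}$, so (for $\alpha\in(0,1)$) the constraint $H^{*}\le(1-\alpha)S_2(\mathbf{x}^{*})$ combined with $H^{*}\ge(1-\alpha)h$ yields $S_2(\mathbf{x}^{*})\ge h$: the optimal network carries at least $h$ triangles. For the edge count I would then fix a spanning tree of $\mathbf{x}^{*}$ ($n-1$ edges) and charge each of the $h$ guaranteed triangles to a distinct non-tree edge, giving $\sum_{i<j}x^{*}_{ij}\ge(n-1)+h$.

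The delicate point is exactly this last charging step: in an arbitrary connected graph a single added edge can complete several triangles at once (as in $K_4$), so ``$h$ triangles cost $h$ extra edges'' is not automatic and is the part of the argument I expect to require real care. I would secure it either by normalising $\mathbf{x}^{*}$ to the case where the counted triangles pairwise share at most one edge — the situation realised by $G_0$, whose $h$ triangles are edge-disjoint on their leaf--leaf edges — or, more conservatively, by stating the $(n-1)+h$ edge figure for the lower-bounding network $G_0$ itself, which is the incumbent actually fed to the branch-and-bound. Everything else is routine counting together with the single case split on the two terms of the minimum defining $h$.
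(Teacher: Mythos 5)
Your construction and your derivation of the triangle bound follow essentially the paper's own route: the paper likewise starts from a star (a connected tree, objective value $0$), adds $h\le n-1$ leaf--leaf edges each closing a triangle through the hub, notes that this feasible network has objective value $\min\left\{(1-\alpha)h,\ \alpha\left(\tfrac{n(n-1)}{2}-(n-1)-h\right)\right\}$, and lets $h$ grow while the first term is the smaller one, which is exactly your case split $h\le\alpha\binom{n-1}{2}$ versus $h=n-1$. Your write-up is in fact the more careful of the two: you verify explicitly that $\alpha S_1(G_0)\ge(1-\alpha)h$, you keep the leaf subgraph triangle-free so that $S_2(G_0)$ is exactly $h$, and you pass from the incumbent value $H^{*}\ge(1-\alpha)h$ to $S_2(\mathbf{x}^{*})\ge h$ via the constraint $H\le(1-\alpha)S_2(\mathbf{x})$, a step the paper leaves implicit (the paper also says, loosely, that a star has $n-1$ pairs of nodes at distance two, whereas there are $\binom{n-1}{2}$; your restriction $h\le n-1$ makes this immaterial).

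Concerning the edge count you rightly flag as delicate: the paper does not close that gap either. It computes $(n-1)+h$ as the number of edges of the constructed star-plus-$h$-edges network and then simply asserts it as a lower bound on the number of edges of the optimal solution. Your objection is sound: connectivity gives only $n-1$ edges, and ``at least $h$ triangles'' does not force $h$ additional edges, since one extra edge can close several triangles at once (your $K_4$ example; equivalently, the number of triangles can exceed the cyclomatic number $m-n+1$, so no spanning-tree charging argument can work without further structure on $\mathbf{x}^{*}$). So the part of the statement that is actually supported by the argument — the paper's as much as yours — is the triangle bound for the optimum together with the $(n-1)+h$ edge figure for the explicit incumbent $G_0$, which is your conservative fallback; proving the edge bound for every optimal solution would require an additional idea not present in the paper.
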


\begin{proof}
Since $\chi$ is the set of all connected graphs, we must have $\sum_{(i,j) \in \mathcal{H}^{2}} x_{ij} \geq n-1$. Every connected graph with $n-1$ edges -- a tree -- is associated to $H = 0$ in (\ref{eq:MaxMin}), as no closed triangle exist in it. Consider a star as a feasible solution of (\ref{eq:MaxMin}) with $H = 0$ and add $h$ edges between pairs of nodes at distance two. Since we have $n-1$ pairs of nodes at distance two in a star, $h \leq n-1$ additional edges might be included, creating $h$ new triangle and resulting in an increase of the value of the objective function up to
$\min \left\{ (1-\alpha) h, ~ \alpha \left(\frac{n(n-1)}{2} - (n-1) - h \right) \right\}$.
Thus, we can keep increasing $h$ as long as $(1-\alpha) h \leq \alpha \left(\frac{n(n-1)}{2} - (n-1) - h \right)$, up to $n-1$, so that the number of closed triangles and the number of edges in the optimal solution will be bounded from below by $h = \min \left\{ (n-1), ~ \alpha \frac{(n-2)(n-1)}{2}\right\}$ and $(n-1) + h$, respectively.
\end{proof}

Stronger valid inequalities can be numerically found by heuristic methods, such as local search, tabu search and ant colony\footnote{We implemented a first-improve local search which adds and remove one edge in each iteration up to converge to a local optimum, in which no single edge can be added or removed with an improve of the objective function.}.

Three instances of problem (\ref{eq:MaxMin}) are solved for $n = 60$ and $\alpha = 0.7$, $0.5$, $0.3$, as shown in figures \ref{fig:MaxMin_TriadsDensity_1}, \ref{fig:MaxMin_TriadsDensity_2} and \ref{fig:MaxMin_TriadsDensity_3} respectively.

\begin{center}
\begin{figure}[H]
        \includegraphics[height=60mm, width=65mm]{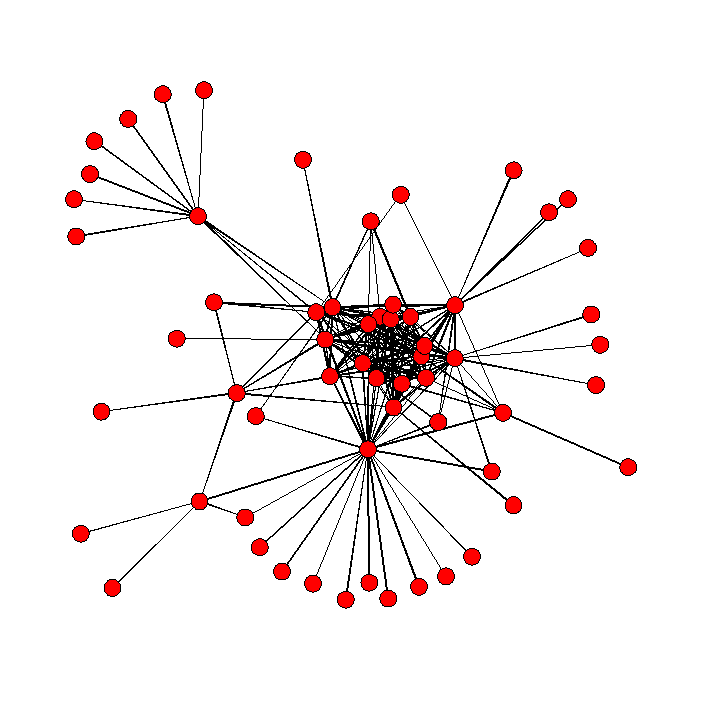}
\caption{\label{fig:MaxMin_TriadsDensity_1} \footnotesize Optimal network, for $n=60$ and $\alpha = 0.7$.}
\end{figure}
\end{center}

\begin{center}
\begin{figure}[H]
        \includegraphics[height=60mm, width=65mm]{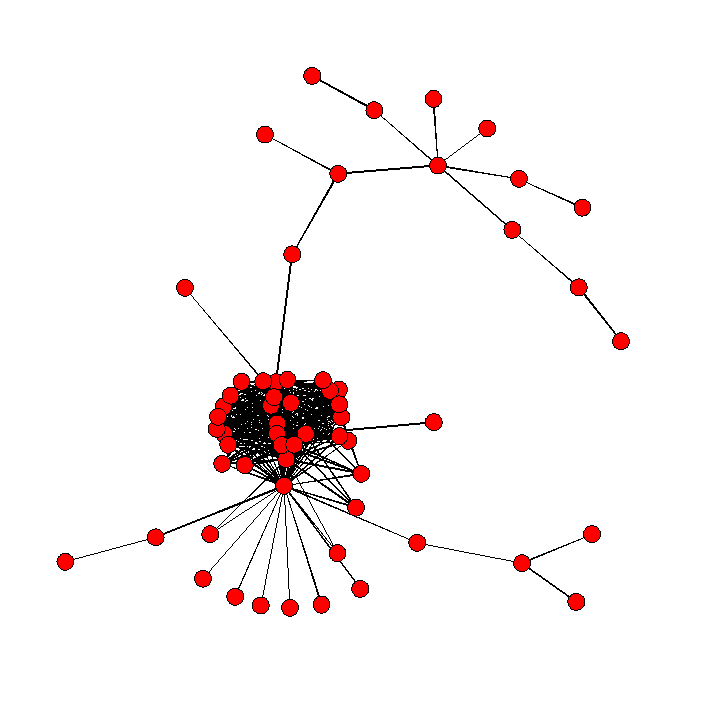}
\caption{\label{fig:MaxMin_TriadsDensity_2} \footnotesize Optimal network, for $n=60$ and $\alpha = 0.5$.}
\end{figure}
\end{center}

\begin{center}
\begin{figure}[H]
        \includegraphics[height=60mm, width=65mm]{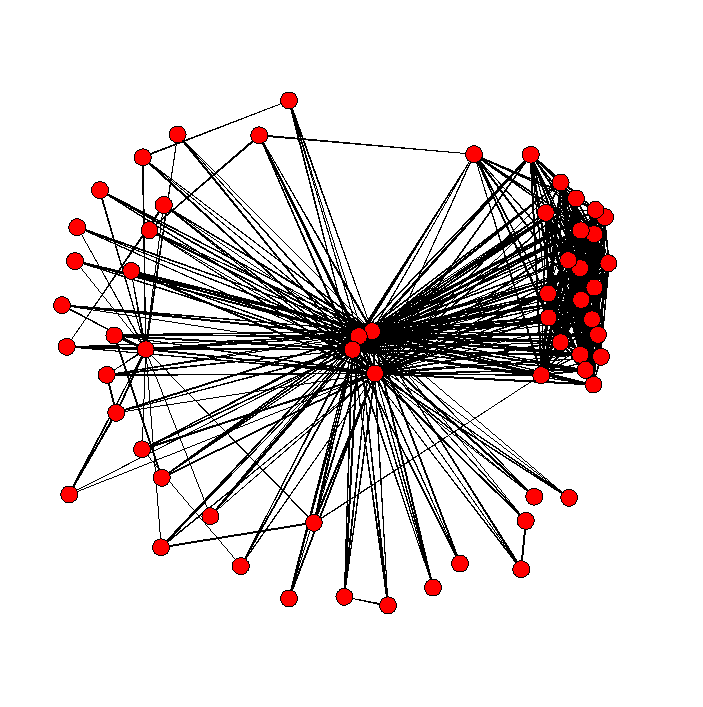}
\caption{\label{fig:MaxMin_TriadsDensity_3} \footnotesize Optimal network, for $n=60$ and $\alpha = 0.3$.}
\end{figure}
\end{center}

Table \ref{tab:MaxMin_TriadsDensity} reports the density, clustering coefficient and average path lengths of the three networks shown in figure \ref{fig:MaxMin_TriadsDensity_1}, \ref{fig:MaxMin_TriadsDensity_2} and \ref{fig:MaxMin_TriadsDensity_3}.

\begin{table}[H]
\centering{}%
\scalebox{0.95}{
\begin{tabular}{lrrr}
\hline
Dataset               & Density & CC      & APL       \tabularnewline
\hline
$\alpha = 0.7$        & 0.11921 & 0.61378 & 2.49265   \tabularnewline
$\alpha = 0.5$        & 0.17966 & 0.66311 & 3.17344   \tabularnewline
$\alpha = 0.3$        & 0.28192 & 0.63821 & 1.71808   \tabularnewline
\hline
\end{tabular}}\caption{\label{tab:MaxMin_TriadsDensity} {\footnotesize{Resulting topological properties.}}}
\end{table}

Note that, although the average path length was not maximized, a really small degree of separation between nodes emerges, as shown in Table \ref{tab:MaxMin_TriadsDensity}.

To obtain a deeper understanding about the emergence of a small degree of separation between nodes, consider an alternative model, where two measures of nodal distance are introduced:
\begin{itemize}
\item the physical distance $S_1(\textbf{x}) = \sum_{(i,j) \in \mathcal{H}^{2}} \delta_{ij} x_{ij}$, where $\delta_{ij}$ is the distance between $i$ and $j$;
\item the network distance $S_2(\textbf{x})$, measured by the size of the total circulating flow to deliver one unit of flow from one node to another, as formulated in (\ref{eq:FlowBased_APL}).
\end{itemize}

The max--min formulation in (\ref{eq:MaxMinProb}) with $\gamma=0$ is considered\footnote{In the case of dealing with negative model parameters $\theta_1$ and $\theta_2$ associated to the two types of distances the max--min formulation in (\ref{eq:MaxMinProb}) can be replaced by a min--max formulation, where the signs of $\theta_1$ and $\theta_2$ are switched to positive and then rescaled as $\alpha = \frac{\theta_1}{\theta_1+\theta_2}$.} and three instances are solved for $n = 60$ and $\alpha = 0.7$, $0.5$, $0.3$, as shown in figures \ref{fig:MinMax_ConnectivityWiring_1}, \ref{fig:MinMax_ConnectivityWiring_2} and \ref{fig:MinMax_ConnectivityWiring_3} respectively\footnote{This problem allows a straightforward application of the Benders decomposition, which separately and iteratively solves the problem of finding the minimum cost network structure and the one of setting the minimum circulating flow within it. We solve it by the brunch and bound algorithm, providing a lower bound to the optimal solution.}.

\begin{center}
\begin{figure}[H]
        \includegraphics[height=60mm, width=65mm]{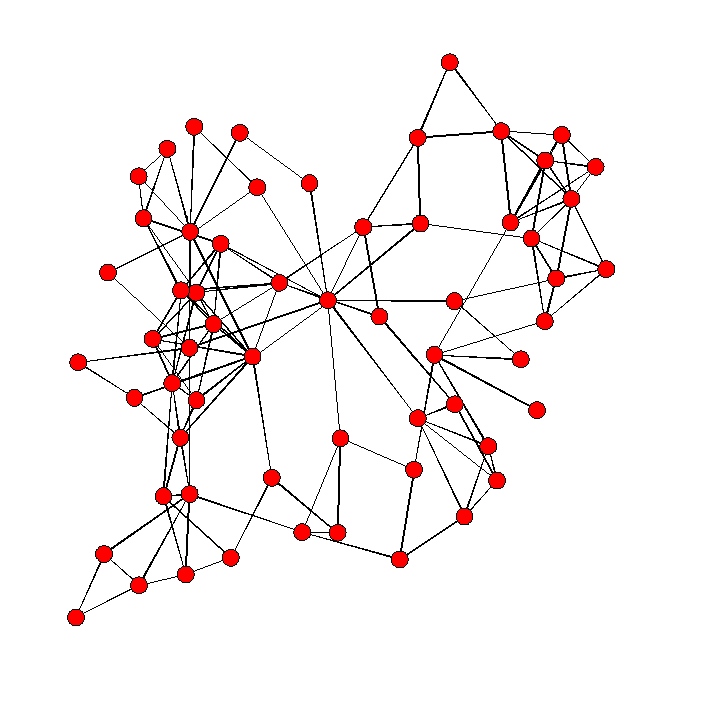}
\caption{\label{fig:MinMax_ConnectivityWiring_1} \footnotesize Optimal network, for $n=60$ and $\alpha = 0.1$.}
\end{figure}
\end{center}

\begin{center}
\begin{figure}[H]
        \includegraphics[height=60mm, width=65mm]{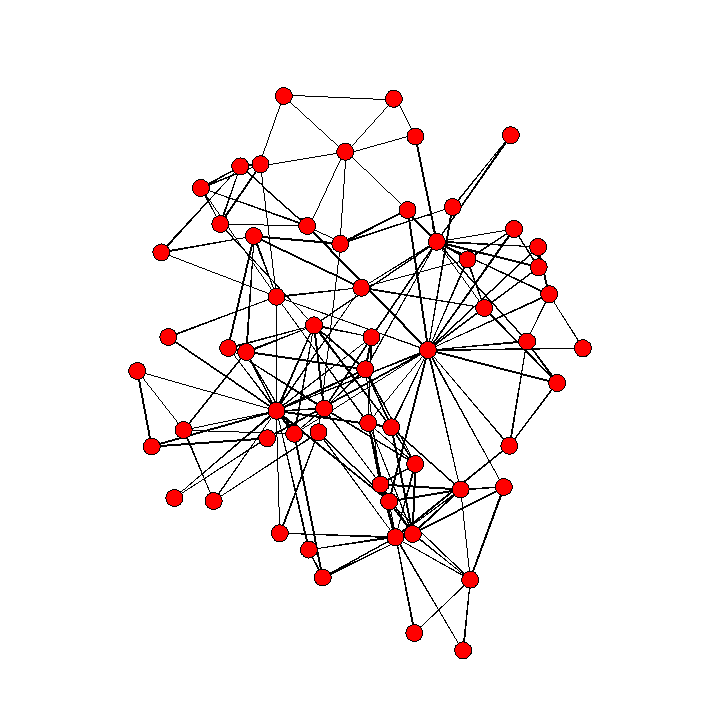}
\caption{\label{fig:MinMax_ConnectivityWiring_2} \footnotesize Optimal network, for $n=60$ and $\alpha = 0.2$.}
\end{figure}
\end{center}

\begin{center}
\begin{figure}[H]
        \includegraphics[height=60mm, width=65mm]{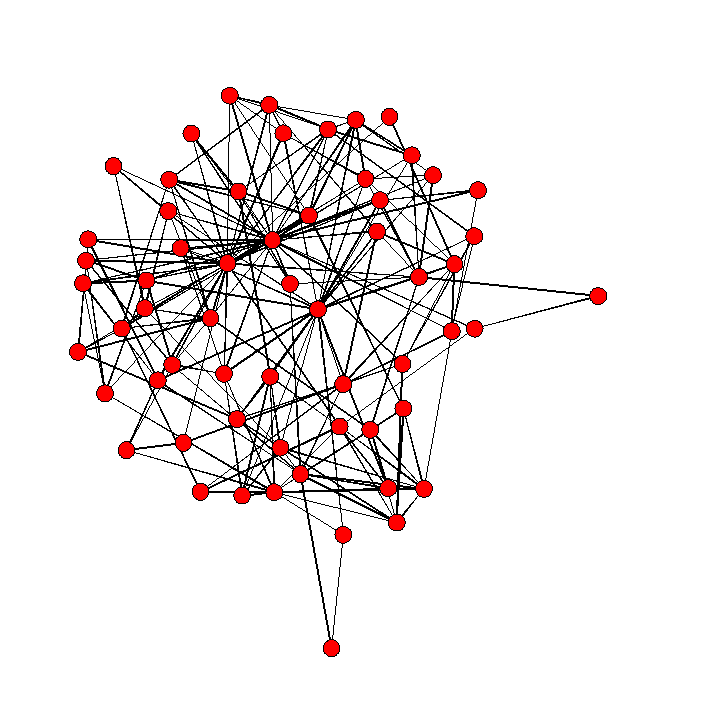}
\caption{\label{fig:MinMax_ConnectivityWiring_3} \footnotesize Optimal network, for $n=60$ and $\alpha = 0.3$.}\label{fig:GG_3}
\end{figure}
\end{center}

Although we only require to minimize the maximum between circulating total flows and total physical distance between connected nodes, the resulting networks shown in figure \ref{fig:MinMax_ConnectivityWiring_1}, \ref{fig:MinMax_ConnectivityWiring_2} and \ref{fig:MinMax_ConnectivityWiring_3} exhibit a remarkably high transitivity (high clustering coefficient), as shown in Table \ref{tab:MinMax_ConnectivityWiring}.

\begin{table}[H]
\centering{}%
\scalebox{0.95}{
\begin{tabular}{lrrr}
\hline
Dataset               & Density & CC      & APL       \tabularnewline
\hline
$\alpha = 0.1$        & 0.07910 & 0.41945 & 3.24633   \tabularnewline
$\alpha = 0.2$        & 0.09944 & 0.27842 & 2.49380   \tabularnewline
$\alpha = 0.3$        & 0.28192 & 0.23901 & 2.16554   \tabularnewline
\hline
\end{tabular}}\caption{\label{tab:MinMax_ConnectivityWiring} {\footnotesize{Resulting topological properties.}}}
\end{table}

This result seems to be particularly coherent with the one of Mathias and Gopa \cite{MathiasGopa2001}, who showed that the small-world topology arises as a consequence of a tradeoff between maximal connectivity and minimal wiring.

\small
\begin{quotation}
\hspace{-6mm} \emph{Consider a toy model of the brain. Let us assume that it consists of local processing units, connected by wires. What constraints act on this system? On the one hand, one would want the highest connectivity (shortest path length) between the local processing units, so that information can be exchanged as fast as possible. On the other hand, it is wasteful to wire everything to everything else.} (See Mathias and Gopa \cite{MathiasGopa2001}, page 2.)
\end{quotation}
\normalsize

The model of Mathias and Gopa tries to merge the intuition of a spatial distribution of nodes with the optimization based criterion of edge formation. Our approach in this context is purely based on the optimization of graph probability under a specified random model which reproduce the pattern of high transitivity and high connectivity of small-world network structures.


\section{Conclusion}

This work provided mathematical programming based approaches to deal with the problem of maximizing graph probability under conditionally exponential models. The underlying idea was to obtain highly reliable networks, when the set of operation and processes performed within a network are unknown and the only available information is a probabilistic model reflecting topological network features.


We have seen that the search of maximally reliable (highly probable) networks might result in difficult combinatorial optimization problems and an important goal of this work was to translate them into solvable systems of linear constraints.

The ability of characterizing the sample space and the graph Hamiltonian by a well defined systems of linear constraints represented a necessary (and sufficient, due to their solvability by integer programming technics) condition to generate maximally reliable networks under a conditionally exponential model, as shown in the described computational results.



\end{multicols}


\begin{thebibliography}{99} 

\bibitem{Bollobas1985}
Bollobas, B., (1985). \textit{Random Graphs}, Cambridge University Press.

\bibitem{CastroNasini2014}
Castro, J., Nasini, S., (2014), Mathematical programming based approaches for classes of random network problems \textit{Submetted to European Journal of Operational Research}.

\bibitem{Erdos1959}
Erdos, P., Rainyi, A., (1959). On random graphs, \textit{Publicationes Mathematicae} 6: 290-297.

\bibitem{Handcock2003}
Handcock M., (3003), Assessing degeneracy in statistical models of social networks, \textit{working paper n. 39}

\bibitem{Jaynes1957}
Jaynes, E.T., (1957), Information theory and statistical mechanics, \textit{Phys. Rev}, 106, 620.

\bibitem{MathiasGopa2001}
Mathias N., Gopa, V., (2001). Small worlds: how and why, \textit{Physical Review E}, 63, 021117.

\bibitem{Newman2002a}
Newman, M.E.J., (2002). Random graphs as models of networks, \textit{Working Papers 02-04-020, Santa Fe Institute}.

\bibitem{Schrijver1998}
Schrijver, A., (1998). \textit{Theory of linear and integer programming}, John Wiley and Sons.

\bibitem{Schrijver2003}
Schrijver, A., (2003). Combinatorial Optimization. \textit{Springer-Verlag}.

\bibitem{WassermanFaust1994}
Wasserman, S., Faust, K., (1994), Social Network Aanalyris, \textit{Carnbridge: University Press}.

\end{thebibliography}
\end{document}